\numberwithin{equation}{section} 
\numberwithin{figure}{section} 
  \theoremstyle{plain}
  \newtheorem*{thm*}{Theorem}
  \theoremstyle{plain}
  \newtheorem{thm}{Theorem}[section]
  \theoremstyle{definition}
  \newtheorem{defn}[thm]{Definition}
  \theoremstyle{plain}
  \theoremstyle{plain}
  \newtheorem{prop}[thm]{Proposition}
  \theoremstyle{plain}
  \newtheorem{corollary}[thm]{Corollary}
  \theoremstyle{remark}
  \theoremstyle{remark}
  \newtheorem*{acknowledgement*}{Acknowledgement}
\def\hol{\mathcal{H}ol} \def\TM{\hat{T}M}
\newcounter{mycounter} 
\let\oldmarginpar\marginpar
\renewcommand\marginpar[1]{\-\oldmarginpar[\raggedleft\footnotesize #1]%
{\raggedright\footnotesize #1}}
\begin{document}

\title[Parameterization-rigidity of the geodesics] {Projective and Finsler metrizability:
  Parameterization-rigidity of the geodesics}

\author[Bucataru]{Ioan Bucataru} \address{Ioan Bucataru, Faculty of
  Mathematics, Al.I.Cuza University \\ B-dul Carol 11, Iasi, 700506,
  Romania} \urladdr{http://www.math.uaic.ro/\textasciitilde{}bucataru/}

\author[Muzsnay]{Zolt\'an Muzsnay} \address{Zolt\'an Muzsnay, Institute
  of Mathematics, University of Debrecen \\ H-4010 Debrecen, Pf. 12,
  Hungary} \urladdr{http://www.math.klte.hu/\textasciitilde{}muzsnay/}

\date{\today}

\begin{abstract} 
  In this work we show that for the geodesic spray $S$ of a Finsler function $F$ the most
    natural projective deformation $\widetilde{S}=S -2 \lambda F\mathbb C$ leads to a
    non-Finsler metrizable spray, for almost every value of $\lambda \in
    \mathbb R$. This result shows how rigid is the
    metrizablility property with respect to certain reparameterizations of
    the geodesics. As a consequence we obtain that the projective class of an arbitrary spray
  contains infinitely many sprays that are not Finsler metrizable.
\end{abstract}

\subjclass[2010]{53C60, 58B20, 49N45, 58E30} \keywords{sprays,
  geodesics, projective metrizability, Finsler metrizability, sectional curvature}

\maketitle

\section{Introduction}
\label{sec:introduction}

A system of second order homogeneous ordinary differential equations
(SODE), whose coefficients functions do not depend explicitly on time,
can be identified with a special vector field, called spray.

The Finsler metrizability problem for a spray $S$ seeks for a Finsler
function whose geodesics coincide with the geodesics of $S$,
\cite{crampin07, krupka85, szilasi02}. In \cite{muzsnay06} a set of
necessary and sufficient conditions for the Finsler metrizability
problem were formulated in terms of the holonomy distribution of a
spray. In this work we will use these conditions to decide wether or not
a spray is Finsler metrizable.

For the projective metrizability problem, one seeks for a Finsler
function whose geodesics coincide with the geodesics of $S$, up to an
orientation preserving reparameterization. The projective metrizability
problem is known as the Finslerian version of Hilbert's fourth problem
\cite{paiva05, crampin08}. In the general case it was Rapcs\'ak
\cite{rapcsak62} who obtained, in local coordinates, necessary and
sufficient conditions for the projective metrizability problem of a
spray.

The two problems can be viewed as particular cases of the inverse
problem of the calculus of variation.  We refer to the review articles
\cite{anderso92, krupkova07, morandi90, sarlet82} for various approaches
of the inverse problem of the calculus of variations. One of this
approaches seeks for the existence of a multiplier matrix that satisfies
four Helmholtz conditions \cite{sarlet82}. In \cite{bd09}, these four
Helmholtz conditions where reformulated in terms of a semi-basic
$1$-form. For the particular case of the Finsler metrizability problem
only three of the Helmholtz conditions are independent \cite{bd09,
  krupka85}, while for the projective metrizability problem, only two
Helmholtz conditions are independent, \cite{bd09}. The formal
integrability of these two Helmholtz conditions was studied in
\cite{bm11} and it lead to some classes of sprays that are projectively
metrizable: isotropic sprays and arbitrary sprays on $2$-dimensional
manifold. Within these classes, we searched for sprays that are not
Finsler metrizable. Of great help for us, at the time, was given by
Yang's example, which was just published online, \cite{yang11}.  Yang
shows that for a flat spray of constant flag curvature its projective
class contains sprays that are not projectively flat and hence cannot be
Finsler metrizable. In this work, using different techniques, we extend
Yang's example, and we show that for an arbitrary spray its projective
class contains sprays that are not Finsler metrizable.

The structure of the paper is as follows. In Section \ref{sec:prelim} we
give a brief introduction of the Fr\"olicher-Nijenhuis theory and the
canonical structures one can define on the tangent bundle of a
manifold. In Section \ref{sec:spraygo} we use the Fr\"olicher-Nijenhuis
theory to introduce the main structures one need to discuss the geometry
of a spray: connection, Jacobi endomorphism, curvature, and covariant
derivative. We pay a special attention to projectively related sprays
and the role of parameterization for the corresponding metrizability problem. In Section \ref{sec:fmp} we discuss the Finsler
metrizability problem and projective metrizability problem for a
spray. For projectively related sprays we provide in Propositions
\ref{prop:sstilde} and \ref{prop:sstilde2} the relations between the
corresponding geometric structures. In Section \ref{sec:classes}, in
Theorem \ref{prop_non_metrizability}, we prove that for an arbitrary
spray $S$, there are infinitely many
values of a scalar $\lambda$ such that the projectively related spray
$\widetilde{S}=S-2\lambda F\mathbb{C}$ is not Finsler metrizable, where
$F$ is a Finsler function and $\mathbb{C}$ the Liouville vector
field. For these values of $\lambda$, we show how to reparameterize
the geodesics of a Finsler function to transform them into
parameterized curves that cannot be the geodesics of any Finsler function.

\section{Preliminaries} \label{sec:prelim}

In this work $M$ is a real and smooth manifold of dimension $n>1$.  We
denote by $C^{\infty}(M)$, the ring of smooth functions on $M$, and by
${\mathfrak X}(M)$, the $C^{\infty}(M)$-module of vector fields on
$M$. Consider $\Lambda(M)=\bigoplus_{k\in {\mathbb N}}\Lambda^k(M)$ the
graded algebra of differential forms on $M$.  We also write
$\Psi(M)=\bigoplus_{k\in {\mathbb N}}\Psi^k(M)$ for the graded algebra
of vector-valued differential forms on $M$.

In this work we will discuss some relations between Finsler and
projective metrizability problems for a homogeneous system of second
order ordinary differential equations using the Fr\"olicher-Nijenhuis
formalism associated to the system. For systematic treatments of the
Fr\"olicher-Nijenhuis theory, we refer to \cite{frolicher56, grifone00,
  KMS93}.  For a vector valued $l$-form $A$ on $M$ consider the inner
product $i_A$, which is a derivation of degree $l-1$ and the Lie
derivation $d_A$, which is a derivation of degree $l$, related by
$$ d_A=i_A\circ d + (-1)^l d\circ i_A.$$ When $l=0$, which means that
$A$ is a vector field, $d_A=\mathcal{L}_A$, the usual Lie
derivative. For two vector valued forms $A\in \Psi^l(M)$ and $B\in
\Psi^s(M)$, the Fr\"olicher-Nijenhuis bracket of $A$ and $B$ is the
unique vector valued $(l+s)$-form $[A,B]$ on $M$ such that
  \begin{eqnarray}d_{[A,B]}=d_A\circ d_B - (-1)^{ls}d_B\circ
    d_A. \label{dab} \end{eqnarray}

  Consider $(TM, \pi, M)$ the tangent bundle of $M$ and
  $(\TM\!:=\!TM\!\setminus\!\{0\}, \pi, M)$ the tangent bundle with the
  zero section removed.  There are some canonical structures one can
  associate to the tangent bundle, such as the vertical distribution,
  Liouville vector field, and tangent structure. We  will
  use the Fr\"olicher-Nijenhuis theory associated to these structures to
  formulate a geometric setting for a a system of SODE, viewed as a
  vector field on the tangent bundle, \cite{bcd11, deleon89, miron94, szilasi03}.

The \emph{vertical distribution} is defined as $V: u\in TM \mapsto
V_u=\{\xi \in T_uTM, d_u\pi(\xi)=0\}$. This distribution is
$n$-dimensional and integrable, since it is tangent to the leaves of the
regular foliation induced by the submersion $\pi$, whose leaves are
tangent spaces to $M$, $\pi^{-1}(p)=T_pM$, for $p\in M$. We denote by
$(x^i)$ local coordinates on the base manifold $M$ and by $(x^i, y^i)$
the induced coordinates on $TM$. It follows that $(y^i)$ are coordinates
in the leaves of the foliation, while $(x^i)$ are transverse coordinates
for the foliation. An important vertical vector field is the
\emph{Liouville vector field}, which locally is given by
$\mathbb{C}=y^i{\partial}/{\partial y^i}.$ The Liouville vector field
will be used to characterize homogeneous objects on $\TM$. For an
integer $s$, we say that a vector valued form $A\in \Psi^l(\TM)$ is
$s$-homogeneous if $\mathcal{L}_{\mathbb{C}}A=(s-1)A$. A form $\omega\in
\Lambda^l(\TM)$ is $s$-homogeneous if
$\mathcal{L}_{\mathbb{C}}\omega=s\omega$.

The \emph{tangent structure} is the $(1,1)$-type tensor field $J$ on
$TM$, which locally is given by $J={\partial}/{\partial y^i}\otimes
dx^i. $ Tensor $J$ satisfies $J^2=0$ and $\operatorname{Ker} J=
\operatorname{Im} J=V$.  The tangent structure $J$ is integrable, which
means that the Fr\"olicher-Nijenhuis bracket $[J,J]$ vanishes. Using
formula \eqref{dab} it follows that $2d_J^2=d_{[J,J]}=0$. Since
$[\mathbb{C}, J]=-J$ it follows that the vector valued $1$-form $J$ is
$0$-homogeneous.

An important class of (vector valued) forms on $\TM$ that are compatible
with the structures presented above is given by semi-basic forms. A form
$\omega$ on $\TM$ is called \emph{semi-basic} if it vanishes whenever
one of its arguments is a vertical vector field. Locally, a semi-basic
$k$-form $\omega$ on $\TM$ can be written as
$$
\omega=\frac{1}{k!}\omega_{i_1...i_k}(x,y)dx^{i_1}\wedge \cdots \wedge
dx^{i_k}.$$ A $1$-form $\omega$ on $\TM$ is semi-basic if and only if
$i_J\omega=\omega \circ J=0$.

A vector valued form $L$ on $\TM$ is called semi-basic if it takes
vertical values and vanishes whenever one of its arguments is a vertical
vector field. In local coordinates, a vector valued semi-basic $l$-form
$L$ on $\TM$ can be written as
$$ L=\frac{1}{l!}L^{j}_{i_1...i_l}(x,y)\frac{\partial}{\partial y^j}\otimes dx^{i_1}\wedge \cdots
\wedge dx^{i_l}.$$ A vector valued $1$-form $L$ on $\TM$ is semi-basic
if and only if $J\circ L=0$ and $i_JL=L \circ J=0$. The tangent
structure $J$ is a vector valued semi-basic $1$-form.

\section{Sprays and related geometric objects} 
\label{sec:spraygo} 

A system of homogeneous second order ordinary differential equations,
whose coefficients functions do not depend explicitly on time, can be
identified with a special vector field on $\TM$ that is called a
spray. In this section we use the Fr\"olicher-Nijenhuis theory to
associate a geometric setting to a spray, \cite{grifone72}. Within this
geometric setting we will discuss in the next sections the Finsler and
projective metrizability problems and some relations between these two
problems.

A vector field $S\in{\mathfrak X}(\TM)$ is called a \emph{spray} if
$JS=\mathbb{C}$ and $[\mathbb{C}, S]=S$.  Locally, a spray can be
expressed as follows
$$ S=y^i\frac{\partial}{\partial x^i} - 2G^i(x,y) \frac{\partial}{\partial
  y^i},$$ for some functions $G^i$ defined on domains of induced
coordinates on $\TM$. The homogeneity condition, $[\mathbb{C}, S]=S$,
for a spray is equivalent with the fact that functions $G^i(x,y)$ are
$2$-homogeneous in the fibre coordinates. In this work we will consider
positive homogeneity only and hence will assume that $G^i(x,\lambda
y)=\lambda^2 G^i(x,y)$ for all $\lambda>0$.

A curve $c:I \to M$ is called \emph{regular} if its tangent lift takes
values in the slashed tangent bundle, $c': I \to \TM$. A regular curve
is called a \emph{geodesic} of spray $S$ if $S\circ c'=c''$. Locally,
$c(t)=(x^i(t))$ is a geodesic of spray $S$ if
\begin{eqnarray} \frac{d^2x^i}{dt^2}+2G^i\left(x,\frac{dx}{dt}\right)=0.
  \label{d2g} \end{eqnarray}
An orientation preserving  reparameterization $t\to \tilde{t}(t)$ of
the system \eqref{d2g} leads to a new spray $\widetilde{S}=S-2P\mathbb
C$, \cite{aim93, shen01}. The scalar function $P\in C^{\infty}(\TM)$ is $1$-homogeneous and
it is related to the new parameter by
\begin{eqnarray}
\frac{d^2 \tilde{t}}{dt^2} = 2P\left(x^i(t),
  \frac{dx^i}{dt}\right)\frac{d\tilde{t}}{dt}, \quad
\frac{d\tilde{t}}{dt}>0. \label{ppar} \end{eqnarray}
\begin{defn}
 Two sprays $S$ and $\widetilde{S}$ are
    \emph{projectively related} if their geodesics coincide up to an
    orientation preserving reparameterization. \end{defn} We will
  refer to the map $S \to \widetilde{S}=S-2P\mathbb{C}$, for $P\in
C^{\infty}(\TM)$ a $1$-homogeneous function, as to the \emph{projective
  deformation} of spray $S$. The aim of this work is to show that
projective deformations, or orientation preserving reparameterizations, are rigid with respect to an important problem
associated to a spray, the Finsler metrizability problem. 

A \emph{nonlinear connection} is defined by an $n$-dimensional
distribution $H: u\in \TM \to H_u\subset T_u(\TM)$ that is supplementary
to the vertical distribution, which means that for all $u\in \TM$, we
have $T_u(\TM)=H_u \oplus V_u$.

Every spray $S$ induces a canonical nonlinear connection through the
corresponding horizontal and vertical projectors, \cite{grifone72}
\begin{eqnarray} h=\frac{1}{2}\left(\operatorname{Id}-[S,J]\right),
  \quad v=\frac{1}{2}\left(\operatorname{Id}+[S,J]\right).
  \label{hv} \end{eqnarray} Equivalently, the canonical nonlinear
connection induced by a spray can be expressed in terms of an
\emph{almost product structure} $\Gamma = -[S,J]=h-v$. With respect to the induced nonlinear
connection, a spray $S$ is horizontal, which means that $S=hS$. Locally,
the two projectors $h$ and $v$ can be expressed as follows
$$h=\frac{\delta}{\delta x^i} \otimes dx^i, \quad
v=\frac{\partial}{\partial y^i}\otimes \delta y^i, \quad
\textrm{where}$$
$$
\frac{\delta}{\delta x^i}=\frac{\partial}{\partial
  x^i}-N^j_i(x,y)\frac{\partial}{\partial y^j}, \quad \delta y^i=dy^i+
N^i_j(x,y)dx^j, \quad N^i_j(x,y)=\frac{\partial G^i}{\partial
  y^j}(x,y).$$ For a spray $S$ consider the vector valued semi-basic
$1$-form
\begin{eqnarray} \Phi= v\circ [S,h] = R^i_j(x,y)
  \frac{\partial}{\partial y^i} \otimes dx^j, \quad R^i_j = 2\frac{\delta
    G^i}{\delta x^j} - S(N^i_j) +
  N^i_kN^k_j, \label{jacobi_end} \end{eqnarray} which will be called the
\emph{Jacobi endomorphism}.

Another important geometric structure induced by a spray $S$ is the
\emph{curvature tensor} $R$. It is the vector valued semi-basic $2$-form
\begin{eqnarray}
  R=\frac{1}{2}[h,h]=\frac{1}{2}R^i_{jk}\frac{\partial}{\partial
    y^i}\otimes dx^j\wedge dx^k , \quad R_{jk}=\frac{\delta N^i_j}{\delta
    x^k} - \frac{\delta N^i_k}{\delta x^j}.\label{curvature}
\end{eqnarray} All geometric objects induced by a spray $S$ inherit the
homogeneity condition. Therefore $[\mathbb{C}, h]=0$, which means that
the nonlinear connection is $1$-homogeneous. Also $[\mathbb{C}, R]=0$,
$[\mathbb{C},\Phi]=\Phi$ and hence the curvature tensor $R$ is
$1$-homogeneous, while the Jacobi endomorphism $\Phi$ is
$2$-homogeneous.

The two vector valued semi-basic $1$ and $2$-forms $\Phi$ and $R$ are
related as follows:
\begin{eqnarray} \Phi=i_SR, \quad
  [J,\Phi]=3R. \label{eq:phir} \end{eqnarray} Locally, the two formulae
\eqref{eq:phir} can be expressed as follows:
\begin{eqnarray} R^i_j=R^i_{kj}y^k, \quad
  R^i_{jk}=\frac{1}{3}\left(\frac{\partial R^i_k}{\partial
      y^j}-\frac{\partial R^i_j}{\partial
      y^k}\right).\label{eq:rr} 
\end{eqnarray} 
For the Jacobi endomorphism $\Phi$ we say that a continuous function
$\kappa\in C^0(\TM)$ is an \emph{eigen function} if there exists a non-zero
horizontal vector field
$X\in \mathfrak{X}(\TM)$ such that $\Phi(X)=\kappa JX$.  The
horizontal vector field $X$ is called an \emph{eigen vector field}. Since the Jacobi
endomorphism is $2$-homogeneous, it follows that its non-zero
eigen functions are $2$-homogeneous functions on $\TM$. See
\cite[p. 58]{grifone00} for more details on the eigen functions and
eigen vector fields for vector valued semi-basic $1$-forms on $TM$. From first
formula \eqref{eq:phir} we obtain that $\Phi(S)=0$ and hence $\kappa=0$
is always an eigen function for the Jacobi endomorphism, the corresponding
eigen vector field is the spray $S$. Therefore, $\operatorname{rank} \Phi\leq
n-1$.

For a spray $S$, consider $\hol_S\subset T(\TM)$ the
\emph{homolnomy distribution} generated by horizontal vector fields and
their successive Lie brackets, \cite{muzsnay06}. If the curvature
  $R$ is non-zero, then $\mathcal{H}ol_S$ contains also vertical
  vector fields. From formula \eqref{curvature} it follows that
$\operatorname{Im} R\subset \hol_S$, while from first formula
\eqref{eq:phir} it follows that $\operatorname{Im} \Phi \subset
\operatorname{Im} R$.

The nonlinear connection induced by a spray $S$ can be characterized
also 
using an \emph{almost complex structure}. It is the $(1,1)$-type tensor
field on $\TM$ given by
\begin{eqnarray*} \mathbb{F}=h\circ [S,h] - J = \frac{\delta}{\delta
    x^i} \otimes \delta y^i - \frac{\partial}{\partial y^i} \otimes
  dx^i. \end{eqnarray*}

For a spray $S$, consider the map $\nabla: \mathfrak{X}(\TM) \to
\mathfrak{X}(\TM)$, given by
\begin{eqnarray} \nabla=h\circ \mathcal{L}_S\circ h + v\circ
  \mathcal{L}_S\circ v =\mathcal{L}_S + h\circ \mathcal{L}_S h + v\circ
  \mathcal{L}_S v \label{nabla}
\end{eqnarray} that will be called the \emph{dynamical covariant
  derivative}. By setting $\nabla f=S(f), \textrm{ for } f\in
C^\infty(\TM)$, using the Leibniz rule, and the requirement that
$\nabla$ commutes with tensor contraction, we extend the action of
$\nabla$ to arbitrary tensor fields and forms on $\TM$, see
\cite[Section 3.2]{bd09}. The action of $\nabla$ on semi-basic forms
coincide with the semi-basic derivation introduced in
\cite[Def. 4.2]{grifone00}. From first formula in \eqref{nabla} it
follows that $\nabla h= 0$ and $\nabla v=0$, which means that $\nabla$
preserves both the horizontal and the vertical distributions. Moreover,
we have $\nabla J=0$, which implies that $\nabla$ has the same action on
horizontal and vertical vector fields. Locally, we can see this from the
following formulae:
\begin{eqnarray*} \nabla \frac{\delta}{\delta
    x^i}=N^j_i\frac{\delta}{\delta x^j}, \quad \nabla
  \frac{\partial}{\partial y^i}=N^j_i\frac{\partial}{\partial
    y^j}. \end{eqnarray*} Using the homogeneity condition $[\mathbb{C},
S]=S$ and formula \eqref{nabla} it follows that $\nabla S=0$ and $\nabla
\mathbb{C}=0$.

Another geometric structure, induced by a spray, and very important for
its geometry, is the \emph{Berwald connection}. It is a linear
connection on $\TM$, and it can be defined as follows $\mathcal{D}:
\mathfrak{X}(\TM) \times \mathfrak{X}(\TM) \to \mathfrak{X}(\TM)$,
\begin{eqnarray} \mathcal{D}_XY=v[hX, vY]\!+\! h[vX, hY] \!+ \!J[vX,
  (\mathbb{F}\!+\!J)Y] \!+\!  (\mathbb{F}\!+\!J)[hX,
  JY]. \label{berwald} \end{eqnarray} Using formula \eqref{berwald}, it
follows that $\mathcal{D}h=0$ and $\mathcal{D}v=0$, which means that the
Berwald connection preserves both the horizontal and vertical
distribution. Moreover, we have $\mathcal{D} J=0$, which implies that
the Berwald connection has the same action on horizontal and vertical
vector fields. Locally, we can see this from the following formulae:
\begin{eqnarray*} \mathcal{D}_{\frac{\delta}{\delta x^i}}
  \frac{\delta}{\delta x^j} = \frac{\partial N^k_i}{\partial y^j}
  \frac{\delta}{\delta x^k}, & \quad \displaystyle
  \mathcal{D}_{\frac{\delta}{\delta x^i}} \frac{\partial}{\partial y^j} =
  \frac{\partial N^k_i}{\partial y^j} \frac{\partial}{\partial y^k}, \\
  \mathcal{D}_{\frac{\partial}{\partial y^i}} \frac{\delta}{\delta x^j} =
  0, & \quad \displaystyle \mathcal{D}_{\frac{\partial}{\partial y^i}}
  \frac{\partial}{\partial y^j} = 0. \end{eqnarray*} Using the fact that
the spray $S$ is horizontal, formulae \eqref{nabla} and \eqref{berwald}
it follows that $\nabla=\mathcal{D}_S$. Therefore, $\mathcal{D}_SS=0$
which means that integral curves of the spray $S$ are geodesics of the
Berwald connection.

\section{Projectively related sprays} \label{sec:fmp}

In this section we discuss the two inverse problems of the calculus of
variations that one can associate to a spray: the Finsler metrizability
problem and the projective metrizability problem. Our aim, in the next
section, will be to search for sprays that are not Finsler metrizable,
within the projective class of a given spray. For this we will need some
formulae that relate the geometric structures of two projectively
related sprays: connections, Jacobi endomorphisms, and curvatures.

\begin{defn} \label{def:finsler} By a \emph{Finsler function} we mean a
  continuous function $F: TM \to \mathbb{R}$ satisfying the following
  conditions:
  \begin{itemize}
  \item[i)] $F$ is smooth on $\TM$;
  \item[ii)] $F$ is positive on $\TM$ and $F(x,0)=0$;
  \item[iii)] $F$ is positively homogeneous of order $1$, which means that
    $F(x,\lambda y)=\lambda F(x,y)$, for all $\lambda>0$ and $(x,y)\in TM$;
  \item[iv)] The \emph{metric tensor} with components
    \begin{eqnarray} g_{ij}(x,y)=\frac{1}{2}\frac{\partial^2 F^2}{\partial
        y^i\partial y^j} \label{gij}
    \end{eqnarray} has rank $n$. \end{itemize}
\end{defn} Conditions ii) and iv) of Definition \ref{def:finsler} imply
that the metric tensor $g_{ij}$ of a Finsler function is positive
definite, \cite{lovas07}.  The regularity condition iv) of Definition
\ref{def:finsler} is equivalent to the fact that the Euler-Poincar\'e
$2$-form of $F^2$, $\omega_{F^2}=dd_JF^2$, is non-degenerate and hence
it is a symplectic structure. Therefore, the equation
\begin{eqnarray} i_Sdd_JF^2=-dF^2 \label{isddj}
\end{eqnarray} uniquely determine a vector field $S$ on $\TM$ that is
called the \emph{geodesic spray} of the Finsler function.
\begin{defn} A spray $S$ is called \emph{Finsler metrizable} if there
  exists a Finsler function $F$ that satisfies the equation \eqref{isddj}.
\end{defn} Necessary and sufficient criteria for the Finsler
metrizability problem for a spray $S$ where formulated in
\cite{muzsnay06} using the holonomy distribution $\hol_S$. We will use
such criteria, in the next section, to construct classes of sprays that
are not Finsler metrizable.

One can reformulate condition iv) of Definition \ref{def:finsler} in
terms of the Hessian of the Finsler function $F$ as follows.  Consider
\begin{eqnarray} h_{ij}(x,y)=F\frac{\partial^2 F}{\partial y^i\partial
    y^j}
  \label{hij}
\end{eqnarray} the \emph{angular metric} of the Finsler function.  Using
the homogeneity of the Finsler function $F$, the metric tensor $g_{ij}$
and the angular tensor $h_{ij}$ are related by
\begin{eqnarray} g_{ij}=h_{ij} + \frac{\partial F}{\partial y^i}
  \frac{\partial F}{\partial y^j}=h_{ij} +
  \frac{1}{F^2}y_iy_j. \label{ghij} \end{eqnarray} In the above formula
\eqref{ghij}, we did use the following calculation, which follows from
the homogeneity of the Finsler function $F$,
\begin{eqnarray} y_i:=g_{ik}y^k=F\frac{\partial F}{\partial
    y^i}=\frac{1}{2}\frac{\partial F^2}{\partial y^i}. \label{y_i}
\end{eqnarray} Throughout this work, we will raise and lower indices
using the metric tensor $g_{ij}$. For the covector field with components
$y_i$, we show now that its horizontal covariant derivative, with
respect to the Berwald connection, vanishes:
\begin{eqnarray} y_{i|j}:=\frac{\delta y_i}{\delta x^j}-\frac{\partial
    N^k_i}{\partial y^j} y_k=0. \label{yij} \end{eqnarray} Indeed we have
$$\left[ \frac{\delta}{\delta x^i}, \frac{\partial}{\partial
    y^j}\right] = \frac{\partial N^k_i}{\partial
  y^j}\frac{\partial}{\partial y^k}. $$ If we apply both sides of this
formula to $F^2$, use formula \eqref{y_i}, and the fact that $d_hF^2=0$
we obtain formula \eqref{yij}.

We consider, the components of the $(1,1)$-type tensor field
\begin{eqnarray} h^i_j:=g^{ik}h_{kj}=
  \delta^i_j-\frac{1}{F^2}y^iy_j.\label{h11} \end{eqnarray} Metric tensor
$g_{ij}$ has rank $n$ if and only if angular tensor $h_{ij}$ has rank
$(n-1)$, see \cite{matsumoto86}. Therefore, the regularity condition of
the Finsler function $F$ is equivalent with the fact that the
Euler-Poincar\'e $2$-form $\omega_{F}=dd_JF$ has rank $2n-2$.
\begin{defn} A spray $S$ is
    \emph{projectively metrizable} if it is projectively related to the
    geodesic spray of a Finsler function.
\end{defn} 
Equivalently, a spray $S$ is projectively metrizable if its geodesics
coincide with the geodesics of a Finsler function, up to an
orientation preserving reparameterization. 

Next proposition, provides the relation between the geometric
structures of two projectively related sprays. We will specialize
these relations in Proposition \ref{prop:sstilde2}, when one of the
spray is the geodesic spray of a Finsler function.
\begin{prop} 
  \label{prop:sstilde} 
  Consider $S$ and $\widetilde{S}$ two projectively related sprays, and
  let $P\in C^{\infty}(\TM)$ be the $ 1$-homogeneous function such that
  $\widetilde{S}=S-2P\mathbb{C}$. The corresponding connections, Jacobi
  endomorphisms, and curvature tensors of the two sprays are related by
  the following formulae: 
  \begin{equation} 
    \label{vvtilde}
    \begin{aligned}
      \widetilde{\Gamma} &=\Gamma -2(PJ + d_JP\otimes \mathbb{C}),
      \\
      \widetilde{h} &=h -PJ - d_JP\otimes \mathbb{C},
      \\
      \widetilde{v} &=v + PJ + d_JP\otimes \mathbb{C},
      \\
      \widetilde{\Phi} &= \Phi +(P^2-S(P))J + (2d_hP-Pd_JP-\nabla
      d_JP)\otimes \mathbb{C},
      \\
      \widetilde{R}&=R + d_Jd_hP\otimes \mathbb{C} + (Pd_JP-d_hP)\wedge
      J.
    \end{aligned}
  \end{equation}
\end{prop}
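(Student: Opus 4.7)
The plan is to derive each of the five identities in \eqref{vvtilde} by substituting $\widetilde{S}=S-2P\mathbb{C}$ (and, once available, $\widetilde{h}$) into the defining relations $\Gamma=-[S,J]$, $h=\tfrac{1}{2}(\mathrm{Id}+\Gamma)$, $v=\tfrac{1}{2}(\mathrm{Id}-\Gamma)$, $\Phi=v\circ[S,h]$, $R=\tfrac{1}{2}[h,h]$, and expanding by bilinearity of the Fr\"olicher--Nijenhuis bracket together with its product rule for a function times a vector field (and for a function times a vector-valued form). The auxiliary ingredients are $[\mathbb{C},J]=-J$, $[J,J]=0$ (equivalently $d_J\circ d_J=0$), the $1$-homogeneity $\mathcal{L}_{\mathbb{C}}P=P$, and the identification of $\nabla$ with $\mathcal{L}_S$ on the horizontal and vertical components separately.

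For the connection block, I would compute $[\widetilde{S},J]=[S,J]-2[P\mathbb{C},J]$. The product rule gives $[P\mathbb{C},J]=P[\mathbb{C},J]-d_JP\otimes\mathbb{C}=-PJ-d_JP\otimes\mathbb{C}$, which yields $\widetilde{\Gamma}=\Gamma-2(PJ+d_JP\otimes\mathbb{C})$. The expressions for $\widetilde{h}$ and $\widetilde{v}$ then follow immediately from $\widetilde{h}=\tfrac{1}{2}(\mathrm{Id}+\widetilde{\Gamma})$ and $\widetilde{v}=\tfrac{1}{2}(\mathrm{Id}-\widetilde{\Gamma})$.

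For the Jacobi endomorphism, I would expand $\widetilde{\Phi}=\widetilde{v}\circ[\widetilde{S},\widetilde{h}]$. The bracket $[\widetilde{S},\widetilde{h}]$ breaks into six pieces $[S,h]$, $[S,PJ]$, $[S,d_JP\otimes\mathbb{C}]$, $[P\mathbb{C},h]$, $[P\mathbb{C},PJ]$, $[P\mathbb{C},d_JP\otimes\mathbb{C}]$, each handled by the product rule. The scalar $P^2-S(P)$ in front of $J$ assembles from the $P\cdot P$ term in $[P\mathbb{C},PJ]$ and the $-S(P)$ term in $[S,PJ]$; the coefficient $2d_hP-Pd_JP-\nabla d_JP$ in front of $\mathbb{C}$ assembles from $[S,d_JP\otimes\mathbb{C}]$, in which $\mathcal{L}_S d_JP$ splits into $\nabla d_JP$ plus a horizontal piece killed later, from $[P\mathbb{C},h]$, which contributes a $d_hP$-term, and from $[P\mathbb{C},d_JP\otimes\mathbb{C}]$, which contributes $Pd_JP$. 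Post-composing with the projector $\widetilde{v}$ discards all horizontal-valued summands and isolates exactly the coefficients of $J$ and $\mathbb{C}$ asserted.

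For the curvature, I would expand $\widetilde{R}=\tfrac{1}{2}[\widetilde{h},\widetilde{h}]$ by bilinearity with $\widetilde{h}=h-PJ-d_JP\otimes\mathbb{C}$. The pure term $\tfrac{1}{2}[h,h]$ reproduces $R$; the quadratic pieces in $PJ$ and $d_JP\otimes\mathbb{C}$ combine, using $[J,J]=0$, into the wedge $(Pd_JP-d_hP)\wedge J$; and the cross terms $-[h,PJ]$ and $-[h,d_JP\otimes\mathbb{C}]$ assemble, via the identities $d_hP=i_h\,dP$ and $d_J^2=0$, into the $d_Jd_hP\otimes\mathbb{C}$ summand. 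The main obstacle throughout is the meticulous book-keeping of Fr\"olicher--Nijenhuis product-rule corrections, especially the distinction between the vector-valued semi-basic form $d_JP\otimes\mathbb{C}$ and the scalar factor $P$, together with the signs introduced by the various form degrees; once these are handled systematically and the identities $[J,J]=0$, $d_J^2=0$, $[\mathbb{C},J]=-J$, and the homogeneity of $P$ are applied in turn, the five formulas of \eqref{vvtilde} follow by matching coefficients of $J$, $\mathbb{C}$, and their wedges.
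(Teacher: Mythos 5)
Your derivations of the first four formulas follow essentially the same route as the paper: expand $\widetilde{\Gamma}=-[\widetilde{S},J]$ with the product rule and $[\mathbb{C},J]=-J$, read off $\widetilde{h}$ and $\widetilde{v}$ from $\widetilde{\Gamma}$, and expand $\widetilde{\Phi}=\widetilde{v}\circ[\widetilde{S},\widetilde{h}]$ term by term using the homogeneity identities, letting post-composition with $\widetilde{v}$ discard the unwanted summands. Where you genuinely diverge is the curvature formula. You propose to expand $\widetilde{R}=\tfrac{1}{2}[\widetilde{h},\widetilde{h}]$ directly by bilinearity; this is legitimate, but it forces you through Fr\"olicher--Nijenhuis brackets of pairs of vector-valued $1$-forms (such as $[h,PJ]$ and $[PJ,d_JP\otimes\mathbb{C}]$), and it implicitly requires the torsion-freeness $[J,h]=0$ of the spray connection, which you do not mention, to control the $[h,PJ]$ term. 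The paper instead deduces $\widetilde{R}$ from the already-established formula for $\widetilde{\Phi}$ via the identity $3\widetilde{R}=[J,\widetilde{\Phi}]$ of \eqref{eq:phir}, which reduces everything to brackets of $J$ with a scalar multiple of $J$ and with a semi-basic form tensored with $\mathbb{C}$, finished off by the commutation formula \eqref{nabladj} and $d_J(Pd_JP)=0$; this is noticeably lighter on bookkeeping and reuses the fourth formula rather than restarting the computation. One detail in your sketch is misattributed: the $-d_hP\wedge J$ part of the wedge cannot come from the quadratic pieces in $PJ$ and $d_JP\otimes\mathbb{C}$, since no horizontal projector appears there; it must arise from the cross term $-[h,PJ]$, whose product-rule expansion produces the $d_hP\wedge J$ contribution, while $-[h,d_JP\otimes\mathbb{C}]$ yields $-d_hd_JP\otimes\mathbb{C}=d_Jd_hP\otimes\mathbb{C}$ precisely because $[h,J]=0$. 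With that correction and the torsion identity made explicit, your expansion closes to the same answer.
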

\begin{proof} 
  Since $\widetilde{S}=S-2P\mathbb{C}$ it follows
  $\widetilde{\Gamma}=-[\widetilde{S}, J]=-[S-2P\mathbb{C},
  J]=-[S,J]+2[P\mathbb{C}, J]$. First formula in \eqref{vvtilde}
  follows, using the homogeneity condition $[\mathbb{C}, J]=-J$. Next
  two formulae are direct consequences of the first one, using the fact
  that $\widetilde{\Gamma}=\widetilde{h}-\widetilde{v}$ and
  $\Gamma=h-v$.

  For the fourth formula, we have to relate
  $\widetilde{\Phi}=\widetilde{v}\circ [\widetilde{S}, \widetilde{h}]$ and
  $\Phi=v\circ [S,h]$. Using the homogeneity conditions of the involved
  geometric objects: $\mathbb{C}(P)=P$, $[\mathbb{C}, S]=S$, $[\mathbb{C},
  h]=0$, and $[\mathbb{C}, J]=-J$ it follows that
  $$ [\widetilde{S}, \widetilde{h}]=[S,h]-PJ + P\Gamma -[S, d_JP]\otimes
  \mathbb{C} + d_JP \otimes \mathbb{C} + 2d_hP \otimes \mathbb{C} - 4Pd_JP
  \otimes\mathbb{C}. $$ Now, if we compose to the left both terms in the
  above formula by $\widetilde{v}$ and use third formula in
  \eqref{vvtilde} we obtain fourth formula in \eqref{vvtilde}. In this
  formula we used the fact that the action of the dynamical covariant
  derivative $\nabla$ on the semi-basic $1$-form $d_JP$ is given by
  $$\nabla(d_JP)=[S,d_JP]-d_vP.$$
  For the last formula in \eqref{vvtilde}, using the relation between
  $\widetilde{\Phi}$ and $\Phi$, we obtain
  \begin{equation}
    \label{3rtilde1}
    \begin{aligned}
      3\widetilde{R} & = [J, \widetilde{\Phi}]
      \\
      & = 3R + d_J(P^2-S(P))\wedge J + [J, (2d_hP-Pd_JP-\nabla
      d_JP)\otimes \mathbb{C}].
    \end{aligned}
  \end{equation} 
  Using the fact that $\omega=2d_hP-Pd_JP-\nabla d_JP$ is a semi-basic
  form, it follows that
  $$ [J, \omega \otimes \mathbb{C}] = d_J\omega \otimes \mathbb{C} -
  \omega \wedge J.$$ In view of these, formula \eqref{3rtilde1} becomes
  \begin{eqnarray} 3\widetilde{R}=3R + d_J\omega \otimes \mathbb{C} +
    \left(-\omega + d_J(P^2-S(P))\right) \wedge
    J.\label{3rtilde2} \end{eqnarray} We compute now $d_J\omega$. We have
  $$d_J\omega = 2d_Jd_hP - d_J(Pd_JP) - d_J\nabla d_JP.$$ Using the
  commutation formula  \begin{eqnarray} \nabla d_J- d_J\nabla = -d_h+
    4i_R \label{nabladj}\end{eqnarray}  
  and the fact that $d_J(Pd_JP)=0$ it follows that
  $d_J\omega=3d_Jd_hP$. Finally, we have
  \begin{eqnarray*} -\omega + d_JP^2 - d_JS(P)= -2d_hP + 3Pd_JP + \nabla
    d_JP - d_J\nabla P= -3d_hP + 3Pd_JP. \end{eqnarray*} If we replace these
  formulae in \eqref{3rtilde2} we obtain that last formula in
  \eqref{vvtilde} is true.
\end{proof} Locally, fourth formula in \eqref{vvtilde} reads as follows
\begin{eqnarray} \widetilde{R}^i_j=R^i_j + (P^2-S(P))\delta^i_j + \left(
    2\frac{\delta P}{\delta x^j} -P\frac{\partial P}{\partial y^j}
    -\nabla\left(\frac{\partial P}{\partial
        y^j}\right)\right)y^i, \label{rrtilde} \end{eqnarray} which is
formula (12.17) in \cite[p.176]{shen01}.

In our work we will be interested in the particular case when the
projective factor is of the form $P=\lambda F$, where $F$ is a Finsler
function and $\lambda$ is a non-zero real number.
\begin{prop} 
  \label{prop:sstilde2} 
  Consider $F$ a Finsler function and let $S$ be its geodesic spray. For
  a non-zero constant $\lambda$, consider the projectively related spray
  $\widetilde{S}=S-2\lambda F \mathbb{C}$. The corresponding
  connections, Jacobi endomorphisms, and curvature tensors of the two
  sprays are related by the following formulae:
  \begin{eqnarray} \nonumber \widetilde{\Gamma} &=&\Gamma -2\lambda(FJ +
    d_JF\otimes \mathbb{C}), \\ \nonumber \widetilde{h} &=&h -\lambda(FJ +
    d_JF\otimes \mathbb{C}), \\
    \label{vvdtilde} \widetilde{v} &=&v + \lambda(FJ + d_JF\otimes
    \mathbb{C}), \\ \nonumber \widetilde{\Phi} &=& \Phi +\lambda^2 (F^2 J -
    F d_JF\otimes \mathbb{C}), \\ \nonumber \widetilde{R}&=&R + \lambda^2 F
    d_JF \wedge J.
  \end{eqnarray}
\end{prop}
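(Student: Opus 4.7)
The plan is to specialize Proposition \ref{prop:sstilde} to the case $P=\lambda F$, where $F$ is the Finsler function and $S$ its geodesic spray, and then to reduce the resulting expressions using the characterizing properties of $S$.

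The first three formulae in \eqref{vvdtilde} follow immediately from the corresponding formulae in \eqref{vvtilde} by using $d_J(\lambda F)=\lambda\,d_JF$. The substance of the proof lies in the last two formulae, whose simplification rests on three identities for the geodesic spray of a Finsler function: (a) $S(F)=0$; (b) $d_hF=0$; (c) $\nabla d_JF=0$.

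Identity (a) follows from \eqref{isddj}: applying $i_S$ to both sides of $i_S dd_JF^2=-dF^2$ and using $i_S\circ i_S=0$ on $2$-forms yields $S(F^2)=0$, hence $S(F)=0$ by positivity of $F$ on $\TM$. Identity (b) is the standard fact that the Finsler function is horizontally constant with respect to its own canonical connection; it can be extracted by applying the Cartan formula to \eqref{isddj}, which together with $d_JF^2(S)=\mathbb{C}(F^2)=2F^2$ gives $\mathcal{L}_S d_JF^2=dF^2$, and then reading off $d_hF^2=0$ from a local-coordinate expansion. For identity (c), I would use the formula $\nabla(d_JP)=[S,d_JP]-d_vP$ recorded in the proof of Proposition \ref{prop:sstilde}: from (a) and $d_JF=\tfrac{1}{2F}d_JF^2$ one gets $\mathcal{L}_S d_JF=dF$, while (b) gives $d_vF=dF-d_hF=dF$; thus $\nabla d_JF=dF-dF=0$.

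With (a)--(c) in hand, substituting $P=\lambda F$ in the fourth formula of \eqref{vvtilde} yields $(P^2-S(P))J=\lambda^2 F^2 J$ and $(2d_hP-Pd_JP-\nabla d_JP)\otimes\mathbb{C}=-\lambda^2 F\,d_JF\otimes\mathbb{C}$, which assemble into the claimed expression for $\widetilde{\Phi}$. For the fifth formula, $d_Jd_hP=\lambda\,d_Jd_hF=0$ and $(Pd_JP-d_hP)\wedge J=\lambda^2 F\,d_JF\wedge J$, giving $\widetilde{R}=R+\lambda^2 F\,d_JF\wedge J$. The main obstacle is identity (b), the horizontal constancy of $F$ under its own Berwald-type connection; identities (a) and (c) are then short consequences of (b) and \eqref{isddj}, and the remaining steps are purely algebraic substitutions into the general formulae \eqref{vvtilde}.
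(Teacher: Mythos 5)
Your proposal is correct and follows essentially the same route as the paper: substitute $P=\lambda F$ into Proposition \ref{prop:sstilde} and reduce using $S(F)=0$, $d_hF=0$ and $\nabla d_JF=0$. The only (immaterial) difference is that you justify $\nabla d_JF=0$ via the explicit formula $\nabla(d_JP)=[S,d_JP]-d_vP$ rather than the commutation formula \eqref{nabladj} used in the paper, and you supply details for $S(F)=0$ and $d_hF=0$ that the paper takes as known.
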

\begin{proof} First three formulae in \eqref{vvdtilde} follow from first
  three formulae in \eqref{vvtilde} by replacing $P=\lambda F$.

  Since $S$ is the geodesic spray for the Finsler function $F$, using
  equation \eqref{isddj} it follows that $S(F)=0$ and hence
  $S(P)=0$. Moreover, we have $d_hF=0$ and using the commutation
  formula \eqref{nabladj} it follows
that $\nabla d_JF=0$. Therefore, for $P=\lambda F$, last two formulae in
\eqref{vvtilde} imply the last two formulae in \eqref{vvdtilde}.
\end{proof} Locally, fourth formula in \eqref{vvdtilde}, can be
expressed, using formula \eqref{h11}, as follows:
\begin{eqnarray} \widetilde{R}^i_j = R^i_j + \lambda^2 F^2
  \left(\delta^i_j - \frac{1}{F^2}y^iy_j\right)=R^i_j+\lambda^2 F^2
  h^i_j. \label{rrdtilde} \end{eqnarray} Formula \eqref{rrdtilde}
corresponds to formula \eqref{rrtilde} for the particular case when
the projective factor is  $P=\lambda F$.

\section{Parameterisation-rigidity of the geodesics of a Finsler space}
\label{sec:classes}

In this section we consider $S$ the geodesic spray of a Finsler
function $F$. We show that the most natural projective deformation $S\to \widetilde{S}=S-2\lambda
  F\mathbb C$, $\lambda\in \mathbb R$ leads to non-Finsler metrizable sprays, for almost all values of
  $\lambda$. Consequently, we obtain that the projective class of an
  arbitrary spray contains infinitely many sprays that are not Finsler
  metrizable. This result shows how rigid is the the Finsler metrizablility property
  with respect to certain reparameterization of the geodesics. We provide the corresponding reparameterization of the
  geodesics of a Finsler function $F$ that transforms them into parametrized curves that
  cannot be the geodesics of any Finsler function. 
  \begin{thm}
    \label{prop_non_metrizability}
    Let $S$ be the geodesic spray associated to the Finsler function
    $F$. Then the projective deformation $\widetilde{S}=S -2\lambda F
    \mathbb C$ of $S$ is not Finsler metrizable for almost every value of $\lambda \in
    \mathbb R$. 
  \end{thm}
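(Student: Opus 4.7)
The plan is to argue by contradiction: assume $\widetilde{S}$ is Finsler metrizable by some Finsler function $\widetilde{F}$, and then extract from the perturbation formulas of Proposition~\ref{prop:sstilde2} a family of necessary conditions on $\widetilde{F}^2$ whose coefficients are polynomial in $\lambda$. The goal is to exhibit one non-trivial polynomial in $\lambda$ that must vanish at every ``metrizable'' value of $\lambda$; since a non-trivial polynomial has only finitely many roots, the metrizable values form a finite set, and in particular a set of measure zero.

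First, I invoke the Finsler metrizability criterion of \cite{muzsnay06}: the existence of $\widetilde{F}$ forces $\widetilde{F}^2$ to be a positive, $2$-homogeneous function on $\TM$ with positive-definite vertical Hessian, annihilated by every vertical vector field in the holonomy distribution $\hol_{\widetilde{S}}$. In particular, since $\operatorname{Im}\widetilde{R}\subset\hol_{\widetilde{S}}$ and this inclusion is preserved by iterated $\widetilde{\nabla}$-derivatives, the conditions $\widetilde{R}(X,Y)\widetilde{F}^2=0$ and $(\widetilde{\nabla}^k\widetilde{R})(X,Y)\widetilde{F}^2=0$ must hold for all horizontal $X,Y$ and all $k\geq 0$. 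Using Proposition~\ref{prop:sstilde2} together with the facts $d_hF=0$, $\nabla d_JF=0$ and $R(X,Y)F^2=0$ (which follow from $S$ being the geodesic spray of $F$), the first of these reduces to
\begin{equation*}
R(X,Y)\widetilde{F}^{\,2}+\lambda^2 F\left(d_JF(X)\,JY(\widetilde{F}^{\,2})-d_JF(Y)\,JX(\widetilde{F}^{\,2})\right)=0,
\end{equation*}
a linear equation in $\widetilde{F}^2$ whose coefficients depend polynomially on $\lambda$. Iterating the dynamical covariant derivative $\widetilde{\nabla}$, whose difference with $\nabla$ is controlled by the perturbation formulas for $\widetilde{h}$ and $\widetilde{v}$ in Proposition~\ref{prop:sstilde2}, yields an infinite family of such linear constraints, still with coefficients in $\mathbb{R}[\lambda]$.

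The decisive step is to truncate this infinite family to a finite subsystem of linear conditions on $\widetilde{F}^2$ whose compatibility is governed by the vanishing of a determinant-like expression. Because all entries are polynomial in $\lambda$, this determinant is itself a polynomial $P(\lambda)$; one then has to verify that $P$ is not identically zero, which is the main obstacle. Concretely, one needs a coordinate computation showing that for $\lambda\neq 0$ the perturbation term $\lambda^{2}Fd_JF\wedge J$, together with its $\widetilde{\nabla}$-iterates, genuinely enlarges $\hol_{\widetilde{S}}$ relative to $\hol_S$; in particular, that a small number of iterated brackets already produces vertical vectors whose joint action on $2$-homogeneous functions has trivial kernel for a generic $\lambda$. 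Once $P(\lambda)\not\equiv 0$ is established, its zero set is finite, and for every $\lambda$ outside this set the linear system is inconsistent, ruling out the existence of $\widetilde{F}$ and proving the theorem.
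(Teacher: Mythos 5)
Your overall strategy is the same as the paper's at the level of the metrizability criterion: both arguments rest on the result of \cite{muzsnay06} that a Finsler energy $\widetilde{F}^2$ for $\widetilde{S}$ would have to be annihilated by every vertical vector field in $\hol_{\widetilde{S}}$. However, there is a genuine gap at exactly the point you flag as ``the main obstacle'': you reduce the theorem to showing that a determinant-like polynomial $P(\lambda)$, built from a truncated system of linear constraints on $\widetilde{F}^2$, is not identically zero, and you never establish this. Nothing in your argument identifies which finite subsystem to take, nor why its incompatibility for generic $\lambda$ should follow; without that, the proof does not close. There is also a structural worry about the family of constraints you generate: you only use $\operatorname{Im}\widetilde{R}$ and its iterates under the dynamical covariant derivative $\widetilde{\nabla}=\widetilde{\mathcal{D}}_{\widetilde{S}}$, i.e.\ derivatives along $\widetilde{S}$ alone, whereas the holonomy distribution is generated by brackets with \emph{all} horizontal vector fields. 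The decisive vector field in the paper's proof arises from the bracket $[\widetilde{h}_i,v_j]$ with $\widetilde{h}_i\neq\widetilde{S}$, so your restricted family may simply fail to contain the obstruction.

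The paper avoids any genericity or determinant argument by producing the obstruction explicitly. The condition defining the admissible $\lambda$ is $\lambda^2F^2+\kappa_\alpha\neq 0$ at a fixed point, where $\kappa_\alpha$ are the eigenfunctions of the Jacobi endomorphism $\Phi$ of $S$ (diagonalizable because $S$ satisfies the Helmholtz condition $d_\Phi d_JF^2=0$); this excludes only finitely many $\lambda$. For such $\lambda$, the fourth formula of Proposition \ref{prop:sstilde2} gives $\widetilde{\Phi}(X_\alpha)=(\lambda^2F^2+\kappa_\alpha)JX_\alpha$, so $\operatorname{rank}\widetilde{\Phi}=n-1$ and the whole $(n-1)$-dimensional vertical subdistribution $\mathcal{V}_{n-1}$ lies in $\hol_{\widetilde{S}}$. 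A single further bracket, $\widetilde{v}[\widetilde{h}_i,v_j]$, computed via the Berwald connection, then has a component $\tfrac{2\lambda}{F}h_{ij}\,\mathbb{C}$ along the Liouville vector field, and since the angular metric $h_{ij}$ has rank $n-1\geq 1$, one concludes $\mathbb{C}\in\hol_{\widetilde{S}}$; as $\mathbb{C}(\widetilde{F}^2)=2\widetilde{F}^2\neq 0$, no Finsler function can exist. To repair your proposal you would need to supply an analogue of this explicit computation; the abstract polynomial-nonvanishing step cannot be taken for granted, since for $\lambda=0$ (and, at the fixed point, for the finitely many excluded $\lambda$) the system genuinely is consistent, so the nonvanishing of $P$ encodes precisely the geometric content you have not proved.
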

\begin{proof} 
  Since is $S$ is the geodesic spray of the Finsler function $F$, it
  satisfies the equation \eqref{isddj}. It follows that all Helmholtz
  conditions are satisfied for the spray $S$, \cite{bd09}, and hence the
  Jacobi endomorphism satisfies $d_{\Phi}d_JF^2=0$. In coordinates, this
  Helmholtz condition reads as follows: $g_{ik}R^k_j=g_{jk}R^k_i$. This
  symmetry condition implies that the Jacobi endomorphism is
  diagonalizable. We denote by $r=\operatorname{rank}{\Phi}$, where
  $r\in\{0,...,n-1\}$ and $\kappa_{\alpha}\in C^0(\TM)$, $\alpha\in
  \{1,...,n-1\}$, the eigen functions of $\Phi$ such that the first
  $r$ eigen functions are not zero.

  We fix a point $(x_0, y_0)\in \TM $ and choose $\lambda \in
  \mathbb{R}^*$
  \begin{eqnarray} 
    \lambda^2F^2 +\kappa_{\alpha}\neq 0, \quad \forall
    \alpha \in \{1,...,n-1\}, 
    \label{techniq} 
\end{eqnarray} 
at the point $(x_0, y_0)$.  We remark that almost
  every $\lambda \in \mathbb{R}^*$ can be chosen, since only a finite number
  is not allowed. Due to the
continuity of the Finsler function $F$ and the eigen functions
$\kappa_{\alpha}$, it follows that there is an open subset
$\mathcal{U}\subset \TM$, $(x_0, y_0)\in \mathcal{U}$, such that
condition \eqref{techniq} is satisfied everywhere on $\mathcal{U}$. For
the remaining of the proof, all geometric objects will be considered
restricted to $\mathcal{U}$.

  For the chosen value of $\lambda$, we consider the projectively related
  spray $\widetilde{S}=S-2\lambda F\mathbb{C}$, with the corresponding
  projectors $\widetilde{h}$, $\widetilde{v}$, and Jacobi endomorphism
  $\widetilde{\Phi}$. They are related to the geometric structures induced
  by spray $S$ through formulae \eqref{vvdtilde}. We will prove first that
  $\operatorname{rank}{\widetilde{\Phi}}=n-1$ on $\mathcal{U}$. Consider
  the vector fields
  \begin{eqnarray} h_i=h_i^j\frac{\delta}{\delta x^j}=\frac{\delta}{\delta x^i}-\frac{1}{F^2}y_iS, \quad v_i=Jh_i
  =h^j_i\frac{\partial}{\partial y^j}=\frac{\partial}{\partial
    y^i}-\frac{1}{F^2}y_i\mathbb{C}. \label{hivi}\end{eqnarray}  Since
  $\operatorname{rank}{h^i_j}=n-1$ and $h^i_jy^j=0$ it follows that
  $\mathcal{H}_{n-1}\!:=\!\operatorname{Spann}\{h_1,...,h_n\}$ is an
  $(n-1)$-dimensional horizontal sub-distribution, orthogonal to $S$.
  Similarly, it follows that
  $\mathcal{V}_{n-1}\!:=\!\operatorname{Spann}\{v_1,...,v_n\}$ is an
  $(n-1)$-dimensional vertical sub-distribution, orthogonal to
  $\mathbb{C}$. The above mentioned ortogonality is considered with
  respect to the Sasaki-type metric tensor on $\mathcal{U}$:
  \begin{eqnarray} G=g_{ij} dx^i\otimes dx^j + g_{ij}\delta y^i \otimes
    \delta y^j.\label{sasaki} \end{eqnarray} Therefore, the tangent space to
  $\mathcal{U}$ can be decomposed into four subspaces, orthogonal to each
  other:
  \begin{eqnarray} T\mathcal{U}= \mathcal{H}_{n-1} \oplus
    \operatorname{Spann}\{S\} \oplus \mathcal{V}_{n-1} \oplus
    \operatorname{Spann}\{\mathbb{C}\}. \label{hsvc} \end{eqnarray} From
  formula \eqref{hsvc} it follows that $J \mathcal{H}_{n-1}
  =\mathcal{V}_{n-1} $. 

An important property of the vertical sub-distribution
$\mathcal{V}_{n-1}$ is the following: for any $Y\in \mathcal{V}_{n-1}$
it follows $Y(F)=0$. Indeed, using formulae \eqref{y_i} and \eqref{hivi} it
follows that for any $v_i\in \mathcal{V}_{n-1}$ we have $v_i(F)=0$.

We prove now that the vertical sub-distribution
  $\mathcal{V}_{n-1}$ is integrable. Since $v_i, v_j\in \mathcal{V}_{n-1}
  \subset V$ and the vertical distribution $V$ is integrable, it follows
  that $[v_i, v_j]\in V= \mathcal{V}_{n-1} \oplus
  \operatorname{Spann}\{\mathbb{C}\}$ and hence \begin{eqnarray} [v_i,
    v_j] = A^l_{ij}v_l + B_{ij}\mathbb{C}\label{vivj}, \end{eqnarray} for
  some locally defined functions $A^l_{ij}$ and $B_{ij}$ on
  $\mathcal{U}$. If
  we apply the vector fields in both sides of formula \eqref{vivj} to the
  Finsler function $F$, and use the fact that $v_l(F)=0$ and $\mathbb{C}(F)=F$, we obtain
  $0=B_{ij}F$. This implies that $B_{ij}=0$ and from formula \eqref{vivj}
  it follows that vertical sub-distribution $\mathcal{V}_{n-1}$ is
  integrable.

  For the Jacobi endomorphism $\Phi$, consider $X_{\alpha}\in
  \mathcal{H}_{n-1} $, the eigen vector fields corresponding to the
  eigen functions
  $\kappa_{\alpha}$, $\alpha \in \{1,...,n-1\}$, which means that
  $\Phi(X_{\alpha}) =\kappa_{\alpha}JX_{\alpha}$. Since $X_{\alpha}\in
  \mathcal{H}_{n-1} $ it follows $J X_{\alpha}\in \mathcal{V}_{n-1} $
  and therefore $d_JF(X_{\alpha})=(JX_{\alpha})F=0$.
  Now, using fourth formula \eqref{vvdtilde} it follows that
  $\widetilde{\Phi}(X_{\alpha})=(\lambda^2
  F^2+\kappa_{\alpha})JX_{\alpha}$, for all $\alpha \in
  \{1,...,n-1\}$. With the choice \eqref{techniq} we made for $\lambda$ it follows that
  $\operatorname{Im}\widetilde{\Phi}=\mathcal{V}_{n-1}$ and hence
  $\operatorname{rank}\widetilde{\Phi}= n-1$ on $\mathcal{U}$.

We will prove now that $\widetilde{S}$ is not Finsler metrizable on
$\mathcal{U}$ by showing that its holonomy distribution
$\hol_{\widetilde{S}}$, contains the Liouville vector field
$\mathbb{C}$. We have that $\widetilde{H}=\operatorname{Im}
\widetilde{h}\subset \hol_{\widetilde{S}}$ and
$\mathcal{V}_{n-1}=\operatorname{Im} \widetilde{\Phi} \subset
\hol_{\widetilde{S}}$, which implies
$\widetilde{h}_i=\widetilde{h}(h_i)\in \hol_{\widetilde{S}}$ and
$v_i \in \operatorname{Im}\widetilde{\Phi} \subset
\hol_{\widetilde{S}}$. Therefore we have $[\widetilde{h}_i, v_j]\in
\hol_{\widetilde{S}}$. We will show that one can choose a pair of
indices $(i,j)$ such that the vector field $[\widetilde{h}_i, v_j] $ has
a component along the Liouville vector field.

Since $\widetilde{h}[\widetilde{h}_i, v_j]\in \hol_{\widetilde{S}}$ it
follows that $\widetilde{v}[\widetilde{h}_i, v_j]\in
\hol_{\widetilde{S}}$. From second formula \eqref{vvdtilde} it follows
that $\widetilde{h}_i=h_i-\lambda Fv_i$ and hence we have
\begin{eqnarray} 
  \widetilde{v}[\widetilde{h}_i, v_j]=
  \widetilde{v}[h_i-\lambda Fv_i, v_j] = \widetilde{v}[h_i, v_j] - \lambda
  F [v_i, v_j]. \label{vhivj}\end{eqnarray} For the last equality in the
above formula we did use that $v_j(F)=0$ and hence
$\widetilde{v}[\lambda Fv_i, v_j] = \lambda F \widetilde{v}[v_i, v_j] $.
From third formula \eqref{vvdtilde} it follows that the restrictions of
$v$ and $\widetilde{v}$ to the vertical distribution $V$ coincide. Since
$v_i, v_j \in \mathcal{V}_{n-1}$ and $\mathcal{V}_{n-1}$ is integrable
it follows that $[v_i, v_j]\in \mathcal{V}_{n-1}$. Therefore,
$\widetilde{v}[v_i, v_j] =v[v_i, v_j]=[v_i, v_j] \in
\mathcal{V}_{n-1}\subset \hol_{\widetilde{S}}$, and using formula
\eqref{vhivj} it follows that $ \widetilde{v}[h_i, v_j] \in
\hol_{\widetilde{S}}$.

  Using third formula \eqref{vvdtilde}, we obtain
  \begin{eqnarray} \widetilde{v}[h_i, v_j] = v[h_i, v_j] + \lambda F
    J[h_i, v_j] + \lambda (J[h_i, v_j])(F) \mathbb{C} \in
    \hol_{\widetilde{S}}. \label{vhivj1} \end{eqnarray} Using formula
  \eqref{berwald} it follows that the first two vector fields in the right
  hand side of the above formula can be expressed in terms of the Berwald
  connection as follows
  \begin{eqnarray} v[h_i, v_j]=\mathcal{D}_{h_i}v_j=
    h^l_ih^k_{j|l}\frac{\partial}{\partial y^k}, \quad J[h_i,
    v_j]=-\mathcal{D}_{v_j}v_i = v_i(h^k_j) \frac{\partial}{\partial
      y^k}. \label{vhivj2} \end{eqnarray} In formula \eqref{vhivj2},
  $h^k_{j|l}$ represents the horizontal covariant derivative of the
  $(1,1)$-type tensor field $h^k_j$ with respect to the berwald
  connection:
  $$ h^k_{j|l}=\frac{\delta h^k_j}{\delta x^l} + h^i_j\frac{\partial
    N^k_i}{\partial y^l} - h^k_i\frac{\partial N^i_j}{\partial y^l}. $$ We
  show that $\mathcal{D}_{h_i}v_j\in \mathcal{V}_{n-1}\subset
  \hol_{\widetilde{S}}$. Since the Berwald connection preserves the
  vertical distribution it follows that $\mathcal{D}_{h_i}v_j\in V$ and in
  view of the orthogonal decomposition $V=\mathcal{V}_{n-1} \oplus
  \operatorname{Spann}\{\mathbb{C}\}$, it remains to show that
  $G(\mathcal{D}_{h_i}v_j, \mathbb{C})=0$. From first formula
  \eqref{vhivj2} and formula \eqref{sasaki} we have
  \begin{eqnarray} G\left(\mathcal{D}_{h_i}v_j, \mathbb{C}\right) =
    G\left(h^l_ih^k_{j|l}\frac{\partial}{\partial y^k},
      y^s\frac{\partial}{\partial y^s}\right)=h^l_ih^k_{j|l}g_{ks}y^s =
    h^l_ih^k_{j|l}y_k=0, \label{ghh} \end{eqnarray} For the last equality in
  formula \eqref{ghh} we did use that $h^k_jy_k=0$ and hence its
  horizontal covariant derivative with respect to the Berwald connection
  is zero as well: $0=(h^k_jy_k)_{|l}=h^k_{j|l}y_k +
  h^k_jy_{k|l}=h^k_{j|l}y_k$, since due to formula \eqref{yij} we have
  $y_{k|l}=0$.

  Let us evaluate now, the vector field $J[h_i, v_j]$ using the second
  formula \eqref{vhivj2}. Using formula \eqref{h11}, we have
  \begin{eqnarray} J[h_i, v_j]=-\mathcal{D}_{v_j}v_i= \frac{1}{F^2}y_iv_j
    + \frac{1}{F^2} h_{ij} \mathbb{C}.\label{jhivj} \end{eqnarray} Using
  formula \eqref{jhivj} and the fact that $v[h_i,v_j]\in
  \hol_{\widetilde{S}}$, it follows that the last two terms in formula
  \eqref{vhivj} can be written as follows
  \begin{eqnarray} \hol_{\widetilde{S}} \ni \lambda F J[h_i, v_j] +
    \lambda (J[h_i, v_j])(F) \mathbb{C} = \frac{\lambda}{F} y_iv_j +
    \frac{2\lambda}{F}h_{ij} \mathbb{C},\label{vhivj3} \end{eqnarray} which
  implies that $h_{ij} \mathbb{C} \in \hol_{\widetilde{S}} $ for all pairs
  of indices $(i,j)$. Since $\operatorname{rank}(h_{ij})=n-1$ and $n>1$ it
  follows that there is at least one pair $(i,j)$ such that $h_{ij}\neq
  0$. Therefore $\mathbb{C}\in \hol_{\widetilde{S}}$ and
  according to Theorem 2 of \cite{muzsnay06} this proves that the
  restriction of $\widetilde{S}$ to $\mathcal{U}$ is not Finsler
  metrizable and hence the spray $\widetilde{S}$ is not Finsler
  metrizable.
\end{proof}
A direct consequence of the Theorem \ref{prop_non_metrizability} is given by the following corollary.
\begin{corollary}
    \label{maint} 
    For any spray its projective class contains infinitely many sprays that
    are not Finsler metrizable. 
  \end{corollary}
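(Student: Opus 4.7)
My plan is to dichotomize on whether the projective class of $S$ contains a Finsler metrizable spray, and in the affirmative case apply Theorem \ref{prop_non_metrizability} to a metrizable representative in order to produce a one-parameter family of non-metrizable sprays within that class.

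First I would record two preliminary observations. Projective relatedness is an equivalence relation on sprays (reflexivity and symmetry are clear, while transitivity follows by composing the two orientation-preserving reparametrisations of the geodesics), so the projective class of any spray is a well-defined set. Moreover, this class is always infinite: it consists of all $S - 2P\mathbb{C}$ with $P \in C^{\infty}(\TM)$ a $1$-homogeneous function, and non-zero $1$-homogeneous functions on $\TM$ always exist (for instance the norm associated to any background Riemannian metric on $M$), so rescaling by arbitrary constants produces an infinite family of distinct projective deformations of $S$.

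Now I would split into cases. If no spray in the projective class of $S$ is Finsler metrizable, then by the preceding observation the class already contains infinitely many sprays, all non-metrizable, and the conclusion is immediate. Otherwise, fix a Finsler metrizable representative $S' = S - 2P_0\mathbb{C}$ in the class, with associated Finsler function $F$. Applying Theorem \ref{prop_non_metrizability} to $S'$, for almost every $\lambda \in \mathbb{R}$ the spray
\[
\widetilde{S}_\lambda \;=\; S' - 2\lambda F \mathbb{C} \;=\; S - 2(P_0 + \lambda F)\mathbb{C}
\]
fails to be Finsler metrizable. Since $F$ is strictly positive on $\TM$ and $\mathbb{C}$ is non-zero there, distinct values of $\lambda$ produce distinct sprays; and since $\widetilde{S}_\lambda$ is projectively related to $S'$, hence to $S$ by transitivity, the family $\{\widetilde{S}_\lambda\}$ supplies uncountably many non-Finsler metrizable sprays in the projective class of $S$.

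There is no serious technical obstacle once Theorem \ref{prop_non_metrizability} is granted. The only subtlety worth emphasising is that the theorem applies directly only to geodesic sprays of Finsler functions, so when $S$ is itself non-metrizable one must first pass to a metrizable representative within its projective class before invoking the theorem; the dichotomy above is precisely what makes this reduction legitimate.
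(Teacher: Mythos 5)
Your proof is correct and follows the route the paper intends: the paper states the corollary as a direct consequence of Theorem \ref{prop_non_metrizability} without further argument, and your dichotomy (either the projective class contains no Finsler metrizable spray, in which case the infinite class is entirely non-metrizable, or it contains a geodesic spray $S'$ of some $F$, to which the theorem applies yielding the family $S'-2\lambda F\mathbb{C}$) is exactly the reasoning needed to justify that claim for an arbitrary, possibly non-metrizable, spray. Your additional checks --- that projective relatedness is an equivalence relation, that the class is infinite, and that distinct $\lambda$ give distinct sprays since $F>0$ on $\TM$ --- are all sound.
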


In the case when the geodesic spray $S$ of a Finsler function $F$ has
constant flag curvature $\kappa$ \cite{bcs00}, it follows that all eigen functions of
the Jacobi endomorphism are $\kappa_{\alpha}=\kappa F^2$, $\alpha \in
\{1,..., n-1\}$. In this case, the condition \eqref{techniq} becomes
\begin{eqnarray} \kappa+\lambda^2\neq 0. \label{kl} \end{eqnarray} It
follows that one can choose $\lambda \in \mathbb{R}^*$ such that
condition \eqref{kl} and hence condition \eqref{techniq} is satisfied
everywhere on $\TM$.

The particular case when the geodesic spray $S$ of a Finsler function
$F$ is flat and has constant flag curvature $\kappa$ was studied by Yang
in \cite{yang11} using different techniques but the same condition
\eqref{kl}. Yang's example has been used also in \cite{bm11} to provide
examples of sprays that are projectively metrizable and not Finsler
metrizable.

We will show now how to reparameterize the geodesics of a Finsler
function $F$ such that the new parameterized curves are not the
geodesics for any Finsler function. Consider $F$ a Finsler function with geodesic equations given by the
system \eqref{d2g}, where $t$ is the arc length of the Finsler function
$F$. Consider $\lambda\in \mathbb R^*$ satisfying the
condition \eqref{techniq}. According to Theorem \ref{prop_non_metrizability} we have to
search for a new parameterization $\tilde{t}$ that satisfies the
equation \eqref{ppar}. It follows that the reparameterization $\tilde{t}=c_1t+2\lambda c_2e^{2\lambda t}$ of the system \eqref{d2g}
leads to a system of second order differential equations that is not Finsler metrizable, 
for $c_1, c_2$ real constants such that $c_1>0$ and $\lambda c_2>0$.

\begin{acknowledgement*} The work of I.B. has been supported by the
  Romanian National Council of Scientific Research (CNCS) Grant PN II ID
  PCE 0291. The work of Z.M. has been supported by the Hungarian
  Scientific Research Fund (OTKA) Grant K67617.
\end{acknowledgement*}

\end{document}